\author{Antoine Ducros}
\address{Sorbonne Université, Université Paris-Diderot, CNRS, Institut de Mathématiques de Jussieu-Paris
Rive Gauche, Campus Pierre et Marie Curie, case 247, 4 place Jussieu, 75252 Paris cedex 5, France}
\email{antoine.ducros\at imj-prg.fr}
\urladdr{https://webusers.imj-prg.fr/$\sim$antoine.ducros/}
\title{Automatic meromorphy in non-archimedean geometry}
\begin{abstract}
In this article, we prove that if $U$ is a Zariski-open subset of a reduced non-archimedean analytic space $X$ and $f$ is an analytic function on $U$
whose zero-locus is equal to $Z\cap U$ for some Zariski-closed susbet $Z$ of $X$, then $f$ extends to a meromorphic function on $X$ (unique
if $U$ is dense). As a corollary, we prove that if $\mathscr X$ is a reduced scheme locally of finite type over an affinoid algebra,
every analytic function on $\mathscr X\an$ with algebraic zero-locus is algebraic. 

\end{abstract}
\newcommand{\eg}{e.\@g.\@}
\newcommand{\ie}{i.\@e.\@}
\newcommand{\resp}{resp.\@~}
\newcommand{\Prop}{Prop.\@~}
\newcommand{\Th}{Thm.\@~}
\newcommand{\an}{^{\mathrm{an}}}
\newcommand{\hr}[1]{\mathscr H(#1)}
\renewcommand{\d}{\mathrm d}
\newcommand{\abs}[1]{\mathopen|#1\mathclose|}
\newcommand{\A}{\mathbf A}
\newcommand{\C}{\mathbf C}
\newcommand{\R}{\mathbf R}
\newcommand{\Z}{\mathbf Z}
\renewcommand{\epsilon}{\varepsilon}
\renewcommand{\phi}{\varphi}
\begin{document}
\maketitle

\section{Introduction}

The purpose of this text is to prove the following theorem and its corollary; analytic
spaces here have to be understood in the sense of Berkovich.

\begin{theo}\label{main}
Let $k$ be a complete, non-archimedean field and let $X$ be a reduced $k$-analytic space. Let $U$
be a  Zariski-open subset of $X$. Let $f$ be an analytic function on $U$. 
The following are equivalent : 

\begin{enumerate}[i]
\item The function $f$ can be extended to a meromorphic function on $X$. 
\item The zero-locus of $f$ is of the form $Z\cap U$ for 
$Z$ a Zariski-closed subset of $X$ (\eg, $f$ is invertible on $U$). 
\end{enumerate}
\end{theo}

\begin{enonce}[remark]{Comments}
Implication (i)$\Rightarrow$(ii) is easy. So the actual content of Theorem \ref{main}
is 
(ii)$\Rightarrow$(i). Note that for an
analytic function $f$ on $U$ to fulfill condition (ii)
it suffices that the zero-locus $T$ of $f$ in $U$ be closed in $X$. Indeed, if this is the case, 
$T=T\cap U$ is Zariski-closed in $X$ as checked on the open cover of $X$ by $U$ and $X\setminus T$. 
\end{enonce}

\begin{coro}\label{cor}
Let $X$ be a reduced scheme locally of finite type over an affinoid 
algebra 
and let $f$ be an analytic function on $X\an$. The following are equivalent: 

\begin{enumerate}[i]
\item The function $f$ 
is algebraic; \ie, it belongs to (the image of) $\mathscr O_X(X)$.
\item The zero-locus of $f$ is algebraic, \ie, of the form
$Y\an$ for $Y$ a Zariski-closed subset of $X$
(\eg, $f$ is an invertible analytic function on $X\an$). 
\end{enumerate}
\end{coro}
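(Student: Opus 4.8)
The implication (i)$\Rightarrow$(ii) is the easy one: if $f$ comes from a regular function on $X$, its scheme-theoretic zero-locus $Y=V(f)$ is Zariski-closed in $X$, and since analytification is compatible with closed immersions and with the formation of the zero-locus, the zero-locus of $f$ on $X\an$ is exactly $Y\an$. The plan is therefore to concentrate on (ii)$\Rightarrow$(i) and to deduce it from Theorem \ref{main} by a compactification argument followed by GAGA.

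First I would reduce to a convenient algebraic situation. Membership in $\mathscr O_X(X)$ can be tested on an affine open cover, and each affine open of $X$ is of finite type over the affinoid base; so I may assume $X=\operatorname{Spec} B$ with $B$ a reduced, finitely generated algebra over the affinoid algebra, and, decomposing along the (finitely many, since affinoid algebras are noetherian) irreducible components, I may even assume $X$ integral. Then I would pick a closed immersion $X\hookrightarrow \A^n$ over the base and let $\bar X\subset\P^n$ be its schematic closure, a proper integral scheme over the affinoid base in which $X$ is a dense Zariski-open subscheme. Analytifying, $X\an$ becomes a dense Zariski-open subset of the compact, reduced space $\bar X\an$. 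Writing $\bar Y$ for the Zariski-closure of $Y$ in $\bar X$, the set $\bar Y\an$ is Zariski-closed in $\bar X\an$ and meets $X\an$ precisely in the zero-locus $Y\an$ of $f$. Thus $f$, viewed on the Zariski-open $X\an\subset\bar X\an$, satisfies hypothesis (ii) of Theorem \ref{main}; since $X\an$ is dense, I obtain a unique meromorphic function $\tilde f$ on $\bar X\an$ extending $f$.

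It remains to recognize $\tilde f$ as algebraic. Here I would invoke non-archimedean GAGA over the affinoid base: because $\bar X$ is proper, the meromorphic functions on $\bar X\an$ are exactly the rational functions on $\bar X$, so $\tilde f\in K(\bar X)=K(X)$. To conclude $\tilde f\in B$, note that its algebraic polar locus is a proper Zariski-closed subset $P\subset X$, and its analytification $P\an$ would be precisely where the meromorphic function $\tilde f$ on $X\an$ fails to be analytic; but on $X\an$ we have $\tilde f=f$, which is analytic everywhere, so $\tilde f$ has no poles on $X\an$, forcing $P\an=\emptyset$ and hence $P=\emptyset$. Therefore $\tilde f$ is regular on $X$, \ie $\tilde f\in\mathscr O_X(X)$, and reassembling the irreducible components and the affine pieces yields (i).

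The hard part is the GAGA input: the assertion that every meromorphic function on the analytification of a proper scheme over an affinoid algebra is algebraic. Since the sheaf of meromorphic functions is not coherent, I would not apply GAGA to it directly but rather to the effective Cartier divisor $D$ of poles of $\tilde f$ and the resulting global section of $\mathscr O(D)$, using the comparison of algebraic and analytic coherent sheaves (and of $\operatorname{Pic}$ and its $H^0$) on the proper analytification. Making this precise—handling the possible non-normality of $\bar X\an$ via a normalization, and above all running the comparison over a genuinely positive-dimensional affinoid base rather than over a point—is where the substantive work lies.
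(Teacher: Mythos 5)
Your overall skeleton---localize to affine $X$, compactify, apply Theorem \ref{main} to the dense Zariski-open $X\an\subset \overline X\an$, then descend via GAGA---is exactly the paper's (\S\ref{proof-main}). The genuine gap lies in the step you yourself single out as ``where the substantive work lies'': GAGA for meromorphic functions over an affinoid base. The route you sketch, via the ``effective Cartier divisor of poles'' of $\tilde f$ and a comparison of $\operatorname{Pic}$ and $H^0(\mathscr O(D))$, breaks down because on a singular proper scheme the polar locus of a meromorphic function need not be Cartier. A schematic closure $\overline X\subset \P^n$ of an affine scheme is in general quite singular, and normalization does not repair this: already on the projective quadric cone $xy=z^2$ in $\P^3$ (which is normal) the rational function $x/z$ has polar divisor a line through the vertex, a Weil divisor that is not Cartier, so there is no line bundle $\mathscr O(D)$ and no section of it to which one could apply GAGA; and resolution of singularities is not available in this setting. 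The paper's Proposition \ref{gaga-mero} avoids divisors altogether: the sheaf of denominators of the meromorphic function is a \emph{coherent} ideal sheaf no matter how singular the space, so coherent GAGA (which does hold in the relative setting over an affinoid base, as cited in the paper) algebraizes it as $\mathscr I\an$ together with the multiplication map $\mathscr I\an\to\mathscr O_{\overline X\an}$, and a prime-avoidance argument on associated primes then produces a regular section of $\mathscr I$ over each affine open, exhibiting $f$ as a quotient $g/h$. This is the missing idea; without it (or a citation of the known statement, recorded in Proposition \ref{gaga-mero} with a reference to Mehmeti), your proof is incomplete at its decisive point.

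Two smaller issues. First, your closing step---that $P\an$ is precisely where $\tilde f$ fails to be analytic, so $P\an=\emptyset$ forces $P=\emptyset$---is true but not formal: it is the comparison of the algebraic and analytic sheaves of denominators (the paper's Lemma \ref{deno}, proved via faithful flatness of $X\an\to X$), followed by a second application of faithful flatness; packaged together this is exactly the paper's Lemma \ref{mero-holo}, $\mathscr O_X(X)=\mathscr K_X(X)\cap\mathscr O_{X\an}(X\an)$. Second, your initial reduction to integral $X$ is both unnecessary (Theorem \ref{main}, Proposition \ref{gaga-mero} and Lemma \ref{mero-holo} require only that $\overline X$ be reduced, and schematic closures of reduced schemes are reduced) and delicate: with three or more irreducible components, regular functions on the components that agree pairwise on the intersections need not glue to a regular function (three concurrent lines in the plane already give a counterexample), so to reassemble the components you would have to use the global analytic function $f$ and a flatness argument that you do not supply. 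Working throughout with reduced, possibly reducible spaces, as the paper does, sidesteps both points.
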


\begin{enonce}[remark]{Comments}
Implication (i)$\Rightarrow$(ii) is obvious. So the actual content of Corollary \ref{cor}
is 
(ii)$\Rightarrow$(i). 
Note that for an analytic function $f$ on $X\an$ to fulfill condition (ii)
it suffices that $X$
be separated and that the zero-locus $T$ of $f$ in $X\an$ be
compact. Indeed, if this is the case, 
then by compactness $T$
is contained in $U\an$ for some
quasi-compact open subscheme $U$ of $X$. 
We can 
choose by Nagata's Theorem a proper
compactification $\overline U$ 
of $U$ over $A$. Then $T$ is a Zariski-closed subset of $\overline U\an$ as checked on the open
cover
of $\overline U\an$ by $U\an$
and $\overline U\an \setminus T$, and so $T$ is
algebraic by GAGA, see
for example \cite[Appendix A]{poineau2010}. 
\end{enonce}

\begin{rema}
In Theorem \ref{main}, the meromorphic extension of $f$ is not unique in general (think of the case $U=\emptyset$ !), 
but it is as soon as $U$ is dense in $X$. 
\end{rema}

\begin{rema}\label{false-over-C}
Theorem \ref{main}
exhibits a typical non-archimedean feature. Indeed, it fails definitely 
in the complex-analytic setting, as well as
Corollary \ref{cor}, as witnessed by the exponential function on $\mathbf A^{1,\mathrm{an}}_{\mathbf C}$. 
\end{rema}

\begin{rema}
The reducedness assumption is necessary for Theorem \ref{main}
and
Corollary \ref{cor}
to hold, at least when $k$ is not trivially valued. 
Indeed, assume that
$\abs{k^\times}\neq \{1\}$ and choose
$f=\sum a_it^i\in k[\![t]\!]$ with infinite radius
of convergence 
which is not a polynomial.  Then 
$1+\epsilon f$ is an invertible analytic function
on the $k$-analytic
space $\mathbf A^{1,\mathrm{an}}_{k[\epsilon]}$ with
$\epsilon$ a non-zero nilpotent element. And $f$ is not 
algebraic, nor does it admit any meromorphic extension 
to $\mathbf P^{1,\mathrm{an}}_{k[\epsilon]}$. 

If $k$ is trivially valued, Theorem \ref{main}
and Corollary \ref{cor}
also fail in general for non-reduced
spaces, as witnessed by the former example
over the field $K:=k(\!(s)\!)$ (equipped with any $s$-adic
absolute value), the point being that $K[\epsilon]$ is
a (non-strict) $k$-affinoid algebra. 
 But if $X$ is any
(possibly non-reduced)
$k$-scheme 
locally of finite type
over $k$ itself (rather than over a $k$-affinoid algebra), then
every analytic function on 
$X\an$ is algebraic -- one checks it on affine schemes by reducing to
the case of the $n$-dimensional affine space, for which this
is obvious. 
\end{rema}

\begin{rema}\label{question-piotr}
When the ring of integers
$k^\circ$ is a discrete valuation ring, Piotr Achinger has suggested 
the following possible analogue
of Theorem \ref{main} for a special formal scheme 
$\mathfrak X$ over $k^\circ$ 
(see \cite{berkovich1996}, \S1 for a definition) : let $f$ be an analytic function 
on $\mathfrak X_\eta$ whose zero-locus is of the form $\mathfrak Y_\eta$ for a Zariski-closed
formal subscheme $\mathfrak Y$ of $\mathfrak X$; then $f$ is bounded-- thus
in some sense meromorphic from the formal
viewpoint. This would be a beautiful and natural result, but we unfortunately do 
not know whether it holds. 
Adapting our strategy to this situation would require
one to establish formal analogues of two results
of
Bartenwerfer that play a key role in our proof (see Proposition \ref{bartenwerfer}). 
\end{rema}

\subsection*{Strategy  of the proof}
As noticed by one of the referees, the proof ultimately consists in establishing a slightly easier particular case of Theorem \ref{main}
and then in showing that meromorphy "can be detected curvewise"
and even "discwise". We shall more precisely prove the following results, from which 
Theorem \ref{main} will follow straightforwardly; we will then get Corollary \ref{cor} by combining
Theorem \ref{main} and GAGA results about meromorphic functions. 

\begin{lemm}[Particular case of Theorem \ref{main}]\label{lemma-disc}
Let $D$ be a (closed or open) one-dimensional disc and let $U\subset D$  be the complement of the origin. 
Let $f$ be an analytic function on $U$. The following are equivalent: 

\begin{enumerate}[i]
\item the function $f$ can be extended to a meromophic function on $D$; \ie,  $f$ has no essential singularity at the origin; 
\item the zero-locus of $f$ is of the form $Z\cap U$ for $Z$ a Zariski-closed subset of $D$; \ie, $f$ is invertible
on a punctured neighborhood of the origin. 
\end{enumerate}
\end{lemm}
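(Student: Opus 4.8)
The plan is to establish the nontrivial implication (ii)$\Rightarrow$(i); the reverse implication is elementary. As the statement itself records, condition (ii) on the disc $D$ is equivalent to the assertion that $f$ is invertible on some punctured neighborhood of the origin, since a proper Zariski-closed subset of the (integral, one-dimensional) disc cannot accumulate at the interior point $0$. So I would fix a coordinate $T$ on $D$ centred at the origin, expand $f$ as a Laurent series $f=\sum_{n\in\Z}a_nT^n$ convergent on a punctured disc $\{0<\abs T<\rho_0\}$, and carry out the whole argument on the inner annuli near the origin; in particular the distinction between the open and the closed disc becomes irrelevant.

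The analytic heart of the matter is the non-archimedean valuation (Newton) polygon. For $0<s<\rho_0$ write $\eta_s$ for the Gauss point of the circle of radius $s$, so that $\abs{f(\eta_s)}=\max_n\abs{a_n}s^n$, and set $t=\log s$; then $t\mapsto\max_n(\log\abs{a_n}+nt)$ is a convex, piecewise-affine function whose slopes are integers. The classical relation between this polygon and the zeros of $f$ asserts that the number of zeros of $f$, counted with multiplicity, in an annulus $\{s_1\le\abs T\le s_2\}$ equals the increase of slope between $s_1$ and $s_2$. Invoking (ii), $f$ has no zero on some punctured disc $\{0<\abs T<\rho\}$, so the slope is constant on $(-\infty,\log\rho)$, \ie there are an integer $m$ and a constant $C>0$ with $\abs{f(\eta_s)}=Cs^m$ for all small $s$. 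Comparing monomials, for every $n<m$ and every small $s$ one gets $\abs{a_n}s^n\le Cs^m$, hence $\abs{a_n}\le Cs^{m-n}$; since $m-n>0$, letting $s\to 0$ forces $a_n=0$. Thus $f=\sum_{n\ge m}a_nT^n$, so $T^{-m}f$ (or $f$ itself, if $m\ge 0$) extends analytically across the origin, and $f$ is meromorphic at the origin, as desired.

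For the easy direction (i)$\Rightarrow$(ii) I would write a meromorphic $f$ as $g/h$ with $g,h$ analytic on $D$ and $h\neq 0$; near the origin $f$ is then of the shape $T^mu$ with $u$ a unit, so $f$ has no zero on a punctured neighborhood of $0$, while away from $0$ its zero-locus is cut out of the Zariski-closed set $V(g)$ by the open condition $h\neq 0$. Combining these over the cover of $D$ by the small disc around $0$ and its complement shows the zero-locus is of the form $Z\cap U$. The single genuine input is thus the Newton-polygon formalism; the rest is bookkeeping. The point to watch is precisely the translation of \emph{``no zeros near the origin''} into \emph{``constant slope''}, which is the content of the non-archimedean Weierstrass/argument-principle theory for annuli and which is valid over an arbitrary complete field $k$, including the trivially valued case.
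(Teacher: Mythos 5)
Your proof is correct, and its skeleton is the same as the paper's: reduce to the case where $f$ is invertible on a punctured disc around the origin, expand $f=\sum_{n\in\Z}a_nT^n$ as a Laurent series, show that $\max_n\abs{a_n}s^n$ is given by a single monomial $Cs^m$ for all small $s$, and let $s\to 0$ to conclude that $a_n=0$ for $n<m$. The difference is the key input used to obtain this monomial domination. The paper proves a self-contained statement (Lemma \ref{dominant-monomial}): an invertible function on an annulus $\{\abs T\in I\}$ admits a dominant monomial, \ie\ a single index $j$ with $\abs{a_j}r^j>\abs{a_i}r^i$ for all $i\neq j$ and all $r\in I$; its proof goes through the reduction map $k\{T,T^{-1}\}\to\widetilde k[T,T^{-1}]$ (units of the Laurent polynomial ring over $\widetilde k$ are monomials) at a fixed radius, followed by a connectedness argument in $r$. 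You instead cite the non-archimedean argument principle: zeros in an annulus are counted by the increase of slope of the valuation polygon, so ``no zeros near $0$'' forces a constant slope and hence $\abs{f(\eta_s)}=Cs^m$. This is a valid classical route, and it even requires slightly less than the paper's lemma (you only use the equality $\max_n\abs{a_n}s^n=Cs^m$, not strict dominance by one term). The one point you should make explicit is that the zero-counting theorem is usually stated over an algebraically closed complete field, with zeros understood geometrically: over general $k$ (including trivially valued $k$) one should remark that the valuation polygon is unchanged by the extension $k\hookrightarrow\widehat{\overline k}$ and that $f$ has no zeros on the Berkovich annulus over $k$ if and only if its pullback has none over $\widehat{\overline k}$, since the projection is surjective and compatible with zero loci. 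This is harmless but necessary bookkeeping, which the paper's reduction-theoretic lemma avoids by working over $k$ directly.
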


\begin{theo}[Discwise detection of meromorphy]\label{theo-discwise}
Let $X$ be a reduced
$k$-analytic space, let $U$ be a Zariski-open subset of $X$ and let $f$ be an
analytic function on $U$. The following are equivalent: 

\begin{enumerate}[i]
\item the function $f$ can be extended to a meromorphic function on $X$ ; 
\item for every complete  extension $L$ of $k$, every one-dimensional
(closed or open) disc $D$ over $L$, and every $k$-morphism 
$\phi \colon D\to X$ such that $\phi^{-1}(U)=D\setminus \{0\}$, the function $\phi^*f$ on $D\setminus \{0\}$ admits 
an extension to a meromorphic function on $D$. 
\end{enumerate}
\end{theo}

\subsection{About the proof of Lemma \ref{lemma-disc}} 
The direct implication is obvious. 
For the converse implication 
one may assume by shrinking $D$
that $f$ is invertible on $U$. The Lemma then follows easily from the 
very classical fact that the power series $f$ admits a dominant monomial.
This is in some sense the core argument on which Theroem \ref{main} relies, 
and the only one that is specific to  the non-archimedean
world, and definitely prevents our method to be adapted over $\C$, see 
Remark \ref{false-over-C}.

\subsection{Quick overview of the proof of Theorem \ref{theo-discwise}}
The direct implication is straightforward. 
Let us assume (ii) and prove (i). 
By arguing componentwise 
one first reduces to the case where
$U$ is dense. The direct implication is then easy. 
For the converse one, one can argue G-locally and thus assume that
$X$ is affinoid. 
Up to performing a suitable radicial
ground field extension and modding out by nilpotents
we can moreover assume that the normalization of $X$
is geometrically normal, and that the reduced irreducible
components of $X\setminus U$ are genercially quasi-smooth.

Relying upon these two facts we then reduce, 
by using 
normalization
and the previously known
non-archimedean analogue
(due to Bartenwerfer) 
of the classical 
extension theorem for arbitrary  meromorphic functions
through Zariski-closed subsets 
of codimension $\geq 2$ on a normal space, 
to the case where $X$ is quasi-smooth and
where $U$ is the complement of a
quasi-smooth hypersurface $S$; and then, 
by looking at the local structure
of the embedding $S\hookrightarrow X$, to the 
the case where $X$ is a product $D\times_k Y$
with $D$ a one-dimensional closed disc
and $Y$ smooth and irreducible,
and where $S=Y\times\{0\}$.
We then pick a Zariski-generic point $y$
of $Y$ and denote by $\phi\colon D_{\hr y}\to Y$ the embedding of the fiber 
at $y$ of the projection map $X\to Y$. By assumption (ii) $\phi^*f$ 
has a meromorphic singularity at the origin of $D_{\hr y}$; this means that 
the coefficients of the power series defining $f$ vanish at $y$ for sufficiently
negative exponents. By genericity of $y$, the corresponding coefficients are actually zero
and $f$ is meromorphic.

\subsection{Structure of the paper}
Section \ref{reminders} is devoted to the 
material in non-Archimedean geometry that will be used in our proofs. 
The latter are presented in section \ref{proofs}; the reader may 
go directly to section \ref{proofs} and refer to section \ref{reminders}
when needed.

\section{Reminders on analytic geometry}\label{reminders}
 
\subsection{General references}
Our framework is that of Berkovich spaces. 
We refer the reader to the first chapters of \cite{berkovich1990}
and \cite{berkovich1993} for the basic definitions. \cite[section 3]{ducros2009}
and \cite[chapter 2]{ducros2018}
for the ``commutative
algebra properties"  (like being normal, reduced, etc.), \cite[section 4]{ducros2009}
for the Zariski topology, \cite[section 6]{ducros2009} and \cite[chapter 5]{ducros2018} 
for the notion of quasi-smoothness, \cite[section 5]{ducros2009}
for the normalization, and \cite[\S 2.6]{berkovich1993}
for the analytification of a scheme. 

Let us mention that contrary to Berkovich we use the notation $\mathscr O_X$ (rather
than $\mathscr O_{X_{\mathrm G}}$) for the structure sheaf
of $X$ \textit{for the G-topology}, so that $\mathscr O_X(V)$ makes sense for any
analytic domain $V$ of $X$. 

\subsection{Analytic functions on annuli}\label{functions-annulus}
In some sense, the core argument in the proof of Theorem \ref{main},
and the only step in which we show by kind of an explicit computation that some function 
is meromorphic, relies on the well-known description of invertible functions on 
(relative) annuli, which we have chosen to recall here for the reader's convenience. 

Let $R_1$ and $R_2$ be two positive real numbers with 
$R_1\leq R_2$. The annulus $\{R_1\leq \abs T\leq R_2\}\subset \A^{1,\mathrm{an}}_k$
is an affinoid space whose algebra of analytic functions is the set
$k\{T/R_2, R_1T^{-1}\}$ of power series
$\sum_{i\in \Z}a_i T^i$ 
with coefficients in $k$
such that $\abs {a_i}R_1^i\to 0$ when $i\to -\infty$ and $\abs {a_i}R_2^i\to 0$ when 
$i\to +\infty$, which can be rephrased by saying that $\abs{a_i}r^i\to 0$ for $\abs i\to +\infty$
for every $r\in [R_1,R_2]$ (the Banach norm of $k\{T/R_2, R_1T^{-1}\}$
maps $\sum a_i T^i$ to the maximum
of $\abs{a_i}r^i$ for $i\in \Z$ and $r\in [R_1,R_2]$). More generally 
if $X=\mathscr M(A)$ is an affinoid
algebra then $\{r_1\leq \abs T\leq R_2\}
\times_k X$ is affinoid and its algebra of
analytic functions is $k\{T/R_2,R_1T^{-1}\}\widehat \otimes_k A$ 
which is the set of power series $\sum a_i T^i$ with coefficients in $A$
such that $\|a_i\|_\infty r^i\to 0$
for $\abs i\to \infty$ for every $r\in [R_1,R_2]$. 

Now let $I$ be an arbitrary non-empty interval of $\R_+^\times$. By exhausting $I$
with compact intervals and using the G-sheaf property of analytic functions, we see that
the ring of analytic functions on $\{ \abs T\in I\}\subset \A^{1,\mathrm{an}}_k$ is
the set
of power series
$\sum_{i\in \Z}a_i T^i$ with $a_i\in k$ for all $i$ 
such that $\abs{a_i}r^i\to 0$ for $\abs i\to +\infty$
for every $r\in  I$; more generally, the ring of analytic functions on
$X\times_k \{ \abs T\in I\}$ is
the set
of power series
$\sum_{i\in \Z}a_i T^i$ with $a_i\in A$ for all $i$ 
such that $\|a_i\|_\infty^i\to 0$ for $\abs i\to +\infty$
for every $r\in  I$. 

\begin{lemm}\label{dominant-monomial}
Let $I$ be a non-empty interval of $\R_+^\times$ and let $f=\sum a_i T^i$
be an analytic function on $\{\abs T\in I\} \subset \A^{1,\mathrm{an}}_k$. 
The following are equivalent: 

\begin{enumerate}[i]
\item there exists $j$ such that $\abs{a_i}r^i>\abs{a_j}r^j$ for all $i\neq j$
and all $r\in I$ ; 
\item $f$ is invertible. 
\end{enumerate}
\end{lemm}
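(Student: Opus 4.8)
The plan is to treat the easy implication (i)$\Rightarrow$(ii) first and then concentrate the work on the converse. Assume (i), say with dominant index $j$, and factor $f=a_jT^j(1+g)$ where $g=\sum_{i\neq j}(a_i/a_j)T^{i-j}$. For a fixed $r\in I$ the seminorm $\sum c_iT^i\mapsto \max_i\abs{c_i}r^i$ is the (multiplicative) Gauss point $x_r$, and hypothesis (i) says exactly that $\abs{g(x_r)}<1$ for every $r\in I$. First I would upgrade this pointwise bound to a uniform one on each compact subinterval $[R_1,R_2]\subset I$: since $\abs{a_i}r^i\to0$ locally uniformly, the function $r\mapsto \max_{i\neq j}\abs{a_i}r^i/(\abs{a_j}r^j)$ is continuous, hence attains a maximum $<1$ on $[R_1,R_2]$. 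Thus $g$ has supremum norm $<1$ on each compact subannulus, so $1+g$ is invertible there via the convergent Neumann series $\sum_{n\geq 0}(-g)^n$; as $a_jT^j$ is obviously a unit, $f$ is invertible on each compact subannulus, and these inverses agree on overlaps by uniqueness and glue, by the G-sheaf property of $\mathscr O$, to an inverse on all of $\{\abs T\in I\}$.

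For the converse (ii)$\Rightarrow$(i) the main point is that invertibility forces a single \emph{strictly} dominant monomial at each radius, and the key input I would use is reduction theory. Fix $r\in I$ and base-change to a complete extension $L/k$ with $r\in\abs{L^\times}$ (invertibility is preserved under base change); rescaling $T$ we may assume $r=1$. Restricting $f$ to the circle $\{\abs T=1\}$ lands it in the algebra $L\{T,T^{-1}\}$, whose Gauss norm $\sum a_iT^i\mapsto\max_i\abs{a_i}$ is multiplicative and whose reduction is the Laurent polynomial ring $\widetilde L[\tau,\tau^{-1}]$. Normalizing the restriction of $f$ to have norm $1$, multiplicativity gives $\abs f\cdot\abs{f^{-1}}=1$, hence $\widetilde f\cdot\widetilde{f^{-1}}=1$ in $\widetilde L[\tau,\tau^{-1}]$; so $\widetilde f$ is a unit of that ring, i.e.\ a monomial $c\tau^{j}$. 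Unwinding the reduction, this says precisely that a single index $j=j(r)$ satisfies $\abs{a_{j(r)}}r^{j(r)}>\abs{a_i}r^i$ for all $i\neq j(r)$.

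It then remains to see that the dominant index $j(r)$ does not depend on $r$. Because each $r\mapsto\abs{a_i}r^i$ is continuous and the terms decay locally uniformly, the strict inequalities defining $j(r_0)$ persist on a neighborhood of $r_0$; hence $r\mapsto j(r)$ is locally constant, and it is constant since $I$ is connected. This is exactly (i). (One can alternatively extract constancy and non-strict domination formally from multiplicativity: setting $\nu(s)=\log\max_i\abs{a_i}e^{is}$ and $\sum b_iT^i=f^{-1}$, both $\nu$ and $-\nu=\log\max_i\abs{b_i}e^{is}$ are convex, so $\nu$ is affine with integer slope $j$; it is the reduction step above that upgrades this to the strict inequality demanded in (i).)

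I expect the genuine obstacle to lie in the converse, and specifically in the reduction step: to make sense of the restriction to an arbitrary circle $\{\abs T=r\}$ one must pass to a ground field extension realizing $r$ in the value group, check that invertibility survives this operation, and invoke that the reduction of a unit of the circle algebra is a monomial of $\widetilde L[\tau,\tau^{-1}]$. By contrast, the direct implication is essentially formal once the pointwise-to-uniform upgrade on compact subannuli and the gluing of local inverses are in place.
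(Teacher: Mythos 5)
Your proof is correct and follows essentially the same route as the paper's: the direct implication by factoring out the dominant monomial and summing a geometric (Neumann) series, and the converse by fixing $r\in I$, enlarging the ground field and rescaling to reduce to the unit circle, using multiplicativity of the Gauss norm and the fact that units of $\widetilde L[\tau,\tau^{-1}]$ are monomials, and finally deducing constancy of the dominant index $j(r)$ from local constancy (via locally uniform decay of the terms) and connectedness of $I$. The only differences are expository: you spell out the pointwise-to-uniform upgrade and gluing in (i)$\Rightarrow$(ii), and compress the explicit $\rho/2$-type tail estimates the paper uses for local constancy, but the underlying arguments are identical.
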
 

\begin{proof}
If (i) holds then $f$ can be written $a_jT^j (1+u)$ with $\abs u<1$ everywhere on 
$\{\abs T\in I\}$, so $f$ is invertible (and $f^{-1}=a_j^{-1}T^{-j}\sum_\ell u^\ell$). 
Now assume that $f$ is invertible, and let us prove (i). We start by handling the particular
case where $I$ is a singleton $\{r\}$. In order to prove (i) we may enlarge the ground field 
and rescale $T$ and $f$, so we can assume that $r=1$ and then that 
$\|f\|=\max_i \abs{a_i}=1$. As $f$ is invertible, its image under the reduction map
$k\{T,T^{-1}\}\to \widetilde k[T,T^{-1}]$ is invertible as well, so is of the form 
$\alpha T^i$ for some $i\in \Z$ and $\alpha\in \widetilde k^\times$. Then $\abs{a_i}=1$
(and $\widetilde{a_i}=\alpha$) and $\abs{a_j}<1$ for every $j\neq i$, whence (i). 

Now let us deal with general $I$. By the above for every $r\in I$ there is 
some integer $i(r)$ such that 
$\abs{a_{i(r)}}r^i>\abs{a_j}r^j$ for every $j\neq i(r)$, and by connectedness of $I$ it suffices to prove that $r\mapsto 
i(r)$ is locally constant.
But this is a straightforward consequence of the fact that $\abs{a_i}s^i\to 0$ when $\abs i\to \infty$
for every $s\in I$. 
\end{proof}

The following lemma describes the local structure
of a pair $(S,X)$ where $X$ is a quasi-smooth affinoid domain and $S$ a quasi-smooth
closed analytic subspace of $X$. It is certainly well-known (such a description is for
instance carried out in the proof of \cite[Theorem 9.1]{berkovich1999}), but to our knowledge
it is not stated explicitly in the litterature, so we have chosen to write it down here. 

\begin{lemm}\label{structure-smooth}
Let $X$ be a quasi-smooth affinoid space over a non-archimedean field $k$, and les 
$S$ be quasi-smooth closed analytic subspace of $X$. Let $x\in S$ and let $d$ 
be the codimension of $S$ in $X$ at $d$. There exists an affinoid neighborhood $V$ of $x$ in $X$, a one-dimensional closed disc $D$, and an isomorphism
$V\simeq D^d\times_k(S\cap V)$ whose restriction to
$S\cap V$ is the composition $(S\cap V)\simeq\{0\}\times_k  (S\cap V)
\hookrightarrow  D^d\times_k(S\cap V)$. 
\end{lemm}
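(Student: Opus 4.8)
The plan is to produce local equations for $S$, complete them to an infinitesimal chart adapted to the pair $(S,X)$, construct an analytic retraction of $V$ onto $S$, and then straighten $V$ into the product $D^d\times(S\cap V)$ by means of the non-archimedean inverse function theorem.

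First I would exploit the quasi-smoothness of both $S$ and $X$. Writing $\mathscr I\subset\mathscr O_X$ for the ideal of $S$, the conormal sheaf $\mathscr I/\mathscr I^2$ is then locally free of rank $d$ near $x$ and the conormal sequence $0\to\mathscr I/\mathscr I^2\to\Omega_{X/k}\otimes\mathscr O_S\to\Omega_{S/k}\to 0$ is a locally split exact sequence of locally free modules. Hence, after replacing $X$ by a small affinoid neighborhood $V$ of $x$, I can choose $f_1,\dots,f_d\in\mathscr O_X(V)$ generating $\mathscr I$ whose images form a basis of $\mathscr I/\mathscr I^2$, equivalently whose differentials $\d f_1,\dots,\d f_d$ are linearly independent in $\Omega_{X/k}$ near $x$. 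Setting $m=\dim_xS=\dim_xX-d$ and using the quasi-smoothness of $S$, I then choose $g_1,\dots,g_m\in\mathscr O_X(V)$ (shrinking $V$ if needed) whose restrictions to $S$ have differentials forming a basis of $\Omega_{S/k}$ near $x$; by the split conormal sequence, $\d f_1,\dots,\d f_d$ together with the $\d g_j$ then form a basis of $\Omega_{X/k}$ at $x$.

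The heart of the argument is to upgrade this infinitesimal chart into an honest product, and this is where I expect the main obstacle. I would first construct an analytic retraction $\pi\colon V\to S\cap V$ with $\pi|_{S\cap V}=\mathrm{id}$. Formally, the completion of $\mathscr O_V$ along $\mathscr I$ is $\mathscr O_{S\cap V}[[f_1,\dots,f_d]]$, and the formal smoothness of $S$ lets one lift $\mathrm{id}_{S\cap V}$ successively along the square-zero extensions $V/\mathscr I^{n+1}\twoheadrightarrow V/\mathscr I^{n}$, producing a formal retraction $\hat\pi$. The delicate point is to show that $\hat\pi$ converges on a genuine affinoid neighborhood: concretely, that after rescaling the $f_i$ the series defining $\hat\pi^*$ extend to a bounded morphism $\mathscr O_{S\cap V}\to\mathscr O_{S\cap V}\{f_1/r,\dots,f_d/r\}$ for some $r>0$. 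This convergence is exactly the analytic tubular-neighborhood statement; I would establish it either by the norm estimates carried out in the proof of \cite[Theorem 9.1]{berkovich1999}, or by choosing the successive liftings with controlled Banach norms so that the usual non-archimedean geometric-series estimate applies. Granting $\pi$, I form $\Phi=(f_1,\dots,f_d,\pi)\colon V\to D^d\times(S\cap V)$, which restricts to the inclusion $(S\cap V)\simeq\{0\}\times(S\cap V)$; its differential at $x$ is an isomorphism, since $\d f_1,\dots,\d f_d$ and the pullbacks $\pi^*\d(g_j|_S)$ form a basis of $\Omega_{X/k}$ at $x$ (the latter agree along $S$ with the basis found above), so $\Phi$ is étale at $x$.

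Finally I would invoke the non-archimedean inverse function theorem. Since the disc coordinates vanish at $x$ and $0$ is the rational centre of $D^d$, the point $\Phi(x)=(0,x)$ satisfies $\hr{\Phi(x)}=\hr x$, so $\Phi$ induces an isomorphism on completed residue fields at $x$; being moreover étale there, it restricts to an isomorphism from an affinoid neighborhood of $x$ onto an affinoid neighborhood of $(0,x)$ in $D^d\times(S\cap V)$. Shrinking the latter to a product $D^d\times W$ with $W$ an affinoid neighborhood of $x$ in $S$, and setting $V:=\Phi^{-1}(D^d\times W)$, I obtain the desired isomorphism, using that $S\cap V=\Phi^{-1}(\{0\}\times W)=W$ because $S$ is cut out by $f_1=\dots=f_d=0$; a final rescaling of the $f_i$ to equal radii produces the stated $D^d$. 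Throughout, the decisive step is the convergence of the retraction, i.e. the passage from the purely formal product structure to an analytic one on a neighborhood of positive polyradius, while the rational-centre trick is what makes the concluding inverse-function-theorem step go through even though $x$ may have nontrivial residue field over the base.
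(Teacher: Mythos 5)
Your reduction to local equations (the conormal argument producing $f_1,\dots,f_d$ cutting out $S$ near $x$ with independent differentials, completed by the $g_j$ to a chart) matches the first half of the paper's proof, and your final inverse-function step is sound in outline. But the proposal has a genuine gap exactly where you say the ``heart of the argument'' is: the existence of a convergent analytic retraction $\pi\colon V\to S\cap V$. Formal smoothness does give the successive liftings along the square-zero extensions $\mathscr O_V/\mathscr I^{n+1}\twoheadrightarrow\mathscr O_V/\mathscr I^{n}$, but these liftings are highly non-canonical and nothing in the formal-smoothness mechanism controls their Banach norms; ``choosing the successive liftings with controlled Banach norms so that the usual geometric-series estimate applies'' is not an argument, and it is not clear it can be made into one without an independent idea (a Newton-type iteration with explicit estimates, say, which you have not set up). Your other fallback --- importing the norm estimates from the proof of \cite[Theorem 9.1]{berkovich1999} --- amounts to citing for the key step the very result the lemma is abstracting (and in the strictly analytic setting at that), so as written the central claim of the lemma is named rather than proved. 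Note that once the retraction exists, the lemma follows quickly by your own step 3--4, so the convergence claim is essentially equivalent to the statement being proved; deferring it leaves the proposal circular.

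The paper's proof is instructive precisely because it avoids constructing any retraction and hence needs no convergence estimate. It uses the chart $p=(g_1,\dots,g_n)\colon V\to E^n$ to view both $V$ (via $p$) and $D^d\times_k(S\cap V)$ (via $q=\mathrm{Id}\times p|_{S\cap V}$) as quasi-\'etale spaces over a polydisc, observes that $p^{-1}(\{0\}\times E^{n-d})$ and $q^{-1}(\{0\}\times E^{n-d})$ are isomorphic as quasi-\'etale spaces over the Zariski-closed subspace $\{0\}\times E^{n-d}$ (both are $S\cap V$), and then invokes the henselian property of a Berkovich space along a Zariski-closed subspace (\cite[Lemma 2.7]{ducros2023}, resting on \cite[\Prop 4.3.4]{berkovich1993}) to propagate this isomorphism to a neighborhood of $\{0\}\times E^{n-d}$, which a tube argument shrinks to a product $D^d\times E^{n-d}$. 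That henselian statement is the tool that plays the role of your missing convergence. A secondary point to watch in your write-up: since $V$ is affinoid, your $\Phi$ can only be \emph{quasi-}\'etale at $x$ (boundary points obstruct \'etaleness), so the inverse-function step must be phrased for quasi-\'etale maps, as the paper systematically does.
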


\begin{proof}
Let $I$ be the ideal defining 
$S$ and set $n=\dim_x X$. 
The $\hr x$-vector space $\Omega_{S/k}\otimes \hr x$ 
is of dimension $n-d$, and is the quotient of  the $n$-dimensional
space $\Omega_{X/k}\otimes \hr x$ by the subspace
generated by the $\d g\otimes 1$ for $g\in I$. Therefore the latter
subspace is $d$-dimensional 
and generated by $\d g_1\otimes 1,\ldots, \d g_d\otimes 1$ for some $g_1,\ldots,g_d$. 
 Let $S'$ be the Zariski closed subspace of $X$ 
defined by the ideal 
$(g_1,\ldots, g_d)$. It contains $x$
and is of dimension at least
$n-d$ at $x$ by the \textit{Hauptidealsatz}, 
and $\Omega_{S'/k}\otimes \hr x$ is of dimension $n-d$ by construction. Therefore $S'$ is quasi-smooth of dimension
$n-d$ at $x$, so there exists an affinoid neighborhood $V$ of $x$ such that $S'\cap V$ is quasi-smooth and irreducible
of dimension $n-d$. Then $S\cap V$
is a closed analytic subspace of $S'\cap V$
of dimension $n-d$ at $x$, which forces the equality 
$S\cap V=S'\cap V$. Otherwise said $S\cap V$ is defined 
as a closed analytic subspace of $V$ by the equations $g_1=0,\ldots, g_d=0$. 
Pick
analytic functions $g_{d+1}, \ldots, g_n$ on $V$ such that the $\d g_i\otimes 1$ generate 
$\Omega_{S/k}\otimes \hr x$. Then $\d g_1\otimes 1,\ldots, \d g_n\otimes 1$ generate $\Omega_{X/k}
\otimes \hr x$. The family $(g_1,\ldots, g_n)$ defines a map $p$ from $V$ to $\mathbf A^{n,\mathrm{an}}_k$, which 
by compactness takes value in $E^n$ for some suitable one-dimensional closed disc $E$. Since $S\cap V$ is described by the equations $g_1=0,\ldots, g_d=0$, 
one has $S\cap V=p^{-1}
(\{0\}\times E^{n-d})$.

The maps $p\colon V\to E^n$ and $p|_{S\cap V}\colon S
\cap V\to\{0\}\times E^{n-d}\simeq E^{n-d}$
are quasi-étale at $x$ by 
construction and in view of \cite[Lemma 5.4.5]{ducros2018}, so we can shrink $V$ around $x$ so that
both maps are quasi-étale. 
Let $q$ be the quasi-étale map 
$\mathrm{Id}\times p|_{S\cap V}\colon E^d\times (S\cap V)\to E^n$. 
By construction, $p^{-1}(\{0\}\times E^{n-d})$ and $q^{-1}(\{0\}\times E^{n-d})$ are isomorphic as
quasi-étale spaces over the closed analytic subspace $\{0\}\times E^{n-d}$ of $E^n$, since
they both can be identified with
\[\begin{tikzcd}S\cap V\ar[rr,"p|_{S\cap V}"']&&\{0\}\times E^{n-d}\end{tikzcd}.\]
It follows by \cite[Lemma 2.7]{ducros2023}
(which itself relies on the henselian property of a Berkovich space along a Zariski-closed subspace, 
see \cite[\Prop 4.3.4]{berkovich1993})
that there exists an analytic neighborhood $U$ of $\{0\}\times E^{n-d}$ in $E^n$ such that $p^{-1}(U)$ and $q^{-1}(U)$ are
$U$-isomorphic. Up to shrinking $U$ we can 
assume that it is of the form $D^d\times E^{n-d}$ where 
$D$ is a closed one-dimensional disc. 
We thus get an isomorphism between 
$\{v\in V, \abs{g_i(v)}\leq r, i=1,\ldots, d\}$ and $D^d\times_k(S\cap V)$ 
whose first component is $(g_1,\ldots, g_d)$
and such that the induced isomorphism between $(g_1,\ldots, g_d)^{-1}(0)=S\cap V$ and
$\{0\}\times_k(S\cap V)$ is the obvious one. 
\end{proof}

\subsection{Meromorphic functions}
One can define straightforwardly a G-sheaf of meromorphic functions
$\mathscr K_X$ on any 
analytic space $X$ 
\cite[2.23]{ducros2021a}; there is a natural embedding
$\mathscr O_X\hookrightarrow \mathscr K_X$ ; if $V=\mathscr M(A)$
is an affinoid domain 
of $X$ then $\mathscr K_X(V)$ is the total ring of fractions of $A$ 
\cite[2.23.4]{ducros2021a}. As in the scheme-theoretic case, every meromorphic
funtion $f$ has a \textit{sheaf of denominators}; this is 
the coherent
sheaf of ideals
whose sections on an analytic domain $V$ consist of those $g\in \mathscr O_X(V)$
such that $gf\in \mathscr O_X(V)$.

\subsection{}
Let $A$ be an affinoid algebra, let
$X$ be an $A$-scheme of finite type and let $\mathscr K_X$ be 
the sheaf of
meromorphic (or rational) functions on $X$. 
Let $\mathscr S$, \resp $\mathscr T$, 
be the subsheaf of $\mathscr O_X$, \resp $\mathscr O_{X\an}$, consisting of those functions
whose germ at every point is a regular element of the corresponding
local ring. The structure map 
$\pi \colon X\an\to X$ is a faithfully flat map of locally ringed spaces. This implies that 
the natural arrow $\mathscr O_X\to \pi_*\mathscr O_{X\an}$ is injective, maps $\mathscr S$
into $\pi_*\mathscr T$, and induces an injective map of \textit{presheaves} $\mathscr S^{-1}\mathscr O_X
\to
\pi_*\mathscr T^{-1}\mathscr O_{X\an}$. The presheaf $\mathscr T^{-1}\mathscr O_{X\an}$ is separated, 
thus it embedds ito its sheafification $\mathscr K_{X\an}$, whence an injective map 
from
$\mathscr S^{-1}\mathscr O_X\hookrightarrow $ to $\pi_*\mathscr K_{X\an}$ and
eventually by sheafifyfing once again
an injective map $\mathscr K_X\hookrightarrow \pi_*\mathscr K_{X\an}$. 

\begin{lemm}\label{deno}
Let $f\in \mathscr K_X(X)$ and let $\mathscr I\subset \mathscr O_X$
be its sheaf of denominators. Then $\mathscr I\an$ is the sheaf of denominators of $f$
viewed
as a meromorphic function on $X\an$. 
\end{lemm}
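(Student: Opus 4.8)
The plan is to check the equality of the two coherent ideal sheaves $\mathscr I\an$ and the sheaf of denominators of $f$ on $X\an$ locally on $X\an$, reducing everything to a colon-ideal computation that is well behaved under the flat map $\pi$. Since $\mathscr K_X$ is the sheafification of the presheaf $\mathscr S^{-1}\mathscr O_X$, I can cover $X$ by affine open subschemes $W$ on which $f$ is represented by a fraction $a/s$ with $a\in\mathscr O_X(W)$ and $s\in\mathscr S(W)$; shrinking $W$ if necessary, multiplication by $s$ is injective on $\mathscr O_W$ (as $s$ is germwise regular), so $\mathscr O_W/s\mathscr O_W$ is the cokernel of an injection and the identity $f=a/s$ holds literally. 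It then suffices to treat each $W$ separately and glue, both sheaves being subsheaves of $\mathscr O_{X\an}$.

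On such a $W$, I would describe the sheaf of denominators as a kernel: the restriction $\mathscr I|_W$ fits into the exact sequence
\[0\to \mathscr I|_W\to \mathscr O_W\xrightarrow{\ \cdot a\ }\mathscr O_W/s\mathscr O_W,\]
where the map sends $g$ to the class of $ga$; indeed its kernel is $\{g: ga\in s\mathscr O_W\}=\{g: gf\in\mathscr O_W\}$, which is exactly $\mathscr I|_W$ (the two sets agree because $s$ is a non-zero-divisor). The next step is to apply the analytification functor $\pi^*$. Since $\pi$ is flat, $\pi^*$ is exact and commutes with cokernels, so I obtain an exact sequence
\[0\to \mathscr I\an|_{W\an}\to \mathscr O_{W\an}\xrightarrow{\ \cdot a\ }\mathscr O_{W\an}/s\mathscr O_{W\an},\]
using the identifications $(\mathscr O_W/s\mathscr O_W)\an=\mathscr O_{W\an}/s\mathscr O_{W\an}$ and $\mathscr I\an|_{W\an}=(\mathscr I|_W)\an$ (compatibility of analytification with open restriction).

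To conclude I must identify this kernel with the analytic sheaf of denominators. Flatness of $\pi$ also preserves the injectivity of multiplication by $s$ (apply $\pi^*$ to $0\to\mathscr O_W\xrightarrow{s}\mathscr O_W$), so $s$ remains germwise regular on $W\an$ and, through the embedding $\mathscr K_X\hookrightarrow\pi_*\mathscr K_{X\an}$ constructed above, the meromorphic function $f$ on $W\an$ is again represented by the fraction $a/s$. Consequently the kernel of $\cdot a\colon \mathscr O_{W\an}\to\mathscr O_{W\an}/s\mathscr O_{W\an}$ is precisely $\{g\in\mathscr O_{W\an}: gf\in\mathscr O_{W\an}\}$, the sheaf of denominators of $f$ on $W\an$. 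Gluing over the cover $\{W\}$ yields the claimed equality on $X\an$.

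The step requiring the most care -- the main obstacle -- is precisely this last identification: one must ensure that the scheme-theoretic local fraction $a/s$ does represent the analytic meromorphic function $f$ and that $s$ stays a regular element after analytification, for otherwise the kernel computed above would not match the analytic denominators. Both points rest on the flatness of $\pi$ (preservation of non-zero-divisors and exactness) together with the explicit description of the embedding $\mathscr K_X\hookrightarrow\pi_*\mathscr K_{X\an}$; once these are in hand, the argument is the routine colon-ideal manipulation sketched above.
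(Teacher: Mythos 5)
Your proof is correct and follows essentially the same route as the paper's: after localizing to affine opens where $f=g/h$ (your $a/s$) with regular denominator, both sheaves of denominators are realized as kernels of the multiplication map $\mathscr O \to \mathscr O/(h)$, and the comparison follows from exactness of the flat pullback along $\pi$. Your write-up merely makes explicit two points the paper leaves implicit, namely that $h$ stays regular after analytification and that the fraction still represents $f$ on the analytic side.
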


\begin{proof}
If $\mathscr J$ denotes the sheaf of denominators of $f$ viewed as 
a meromorphic function on $X\an$, it is clear that  $\mathscr I\an
\subset \mathscr J$. To show that this inclusion is actually an isomorphism, one may
argue locally on $X$, and thus assume that $X$ is affine. One can then write
$f=g/h$ with $g$ and $h$ in $\mathscr O_X(X)$ and $h$ regular. 
Then $\mathscr I$ is 
the kernel of the map
$\mathscr O_X\to \mathscr O_X/(h)$ induced by multiplication by $g$,
and $\mathscr J$
is te kernel of the map
$\mathscr O_{X\an}\to \mathscr O_{X\an}/(h)$ induced by multiplication by $g$,
hence $\mathscr J=\mathscr I\an$.
\end{proof}

\begin{lemm}\label{mero-holo}
One has $\mathscr O_X(X)=
\mathscr K_X(X)\cap \mathscr O_{X\an}(X\an)
$.
\end{lemm}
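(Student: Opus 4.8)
The plan is to establish only the nontrivial inclusion, the reverse one $\mathscr O_X(X)\subseteq \mathscr K_X(X)\cap \mathscr O_{X\an}(X\an)$ being immediate: a regular function is a fortiori rational and analytifies to a holomorphic function, compatibly with the embeddings into $\mathscr K_{X\an}(X\an)$ constructed above. So I would reduce the whole statement to showing that a rational function on $X$ whose analytification is holomorphic is already regular.

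To this end I would fix $f\in \mathscr K_X(X)\cap \mathscr O_{X\an}(X\an)$, let $\mathscr I\subset \mathscr O_X$ be its (coherent) sheaf of denominators, and aim to prove $\mathscr I=\mathscr O_X$. This suffices: it forces $1\in \mathscr I(X)$, which by the definition of the sheaf of denominators says precisely that $f=1\cdot f\in \mathscr O_X(X)$, as wanted.

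The argument for $\mathscr I=\mathscr O_X$ would go as follows. Since $f$ is holomorphic on all of $X\an$, the section $1$ is a denominator of $f$ over every analytic domain, so the sheaf of denominators of $f$ regarded as a meromorphic function on $X\an$ is the whole sheaf $\mathscr O_{X\an}$. By Lemma \ref{deno} that sheaf of denominators is exactly $\mathscr I\an$, whence $\mathscr I\an=\mathscr O_{X\an}$. I would then pass to the coherent quotient $\mathscr O_X/\mathscr I$ on $X$: analytification being exact, its analytification is $\mathscr O_{X\an}/\mathscr I\an=0$, and the faithful flatness of $\pi\colon X\an\to X$ recorded above forces a coherent sheaf with vanishing analytification to vanish. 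Hence $\mathscr O_X/\mathscr I=0$, that is $\mathscr I=\mathscr O_X$.

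The main, and really only, substantive point is the final descent $\mathscr I\an=\mathscr O_{X\an}\Rightarrow \mathscr I=\mathscr O_X$, which is exactly where faithful flatness of analytification enters; Lemma \ref{deno} does the preparatory work of identifying the two sheaves of denominators, and everything else is a formal unwinding of definitions. I do not expect a genuine obstacle, the only potential pitfall being to use the various injections into $\mathscr K_{X\an}(X\an)$ consistently, so that holomorphy of the analytification of $f$ is correctly translated into the equality $\mathscr I\an=\mathscr O_{X\an}$.
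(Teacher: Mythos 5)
Your proof is correct and follows essentially the same route as the paper: both use Lemma \ref{deno} to identify the sheaf of denominators of $f$ on $X\an$ with $\mathscr I\an$, and then descend along the faithfully flat map $X\an\to X$ via the quotient $\mathscr O_X/\mathscr I$. The only cosmetic difference is that the paper deduces $1\in\mathscr I(X)$ from injectivity of $(\mathscr O_X/\mathscr I)(X)\to(\mathscr O_{X\an}/\mathscr I\an)(X\an)$, whereas you deduce $\mathscr I=\mathscr O_X$ from the vanishing of the analytified quotient sheaf; these are interchangeable consequences of faithful flatness.
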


\begin{proof}
The direct inclusion is obvious. For the converse one, let $f$ be an element of 
$\mathscr K_X(X)\cap \mathscr O_{X\an}(X\an)$ and let $\mathscr I\subset \mathscr O_X$ be
the sheaf of denominators of $f$ viewed as a meromorphic
function on $X$.  By Lemma \ref{deno}
above, $\mathscr I\an$ is the sheaf of denominators of $f$ viewed as a
meromorphic function on $X\an$. The fact that $f\in \mathscr O(X\an)$ means that 
$1\in \mathscr I\an(X\an)$, which we can reformulate by saying that $1$ is zero in $(\mathscr O_{X\an}/
\mathscr I\an)(X\an)$. But by
faithfull flatness of $X\an \to X$, the map
$(\mathscr O_X/\mathscr I)(X)\to (\mathscr O_{X\an}/\mathscr I\an)(X\an)$ is injective. 
Therefore $1$ is zero in $(\mathscr O_X/\mathscr I)(X)$, so
$1\in \mathscr I(X)$ and $f\in \mathscr O_X(X)$. 
\end{proof}

\begin{prop}[See also Theorem 8.7 of \cite{mehmeti2022}]\label{gaga-mero}
Assume that the scheme  $X$ is proper over $A$. 
The natural embedding
$\mathscr K_X(X)\hookrightarrow \mathscr K_{X\an}(X\an)$ is an isomorphism. 
\end{prop}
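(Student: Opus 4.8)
The injectivity of the arrow is already built into the discussion preceding the statement, so the entire content is its \emph{surjectivity}, and the main external input will be Berkovich's GAGA theorem for $X$ proper over the affinoid algebra $A$ (see \eg \cite[Appendix A]{poineau2010}): the analytification functor $\mathscr F\mapsto \mathscr F\an$ is an equivalence between coherent $\mathscr O_X$-modules and coherent $\mathscr O_{X\an}$-modules, in particular fully faithful and exact, so that it preserves monomorphisms and $\mathrm{Hom}$-groups.

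The plan is as follows. Start with $f\in \mathscr K_{X\an}(X\an)$ and let $\mathscr J\subset \mathscr O_{X\an}$ be its sheaf of denominators, a coherent ideal. Multiplication by $f$ defines an $\mathscr O_{X\an}$-linear map $\mu\colon \mathscr J\to \mathscr O_{X\an}$. Applying GAGA to the monomorphism $\mathscr J\hookrightarrow \mathscr O_{X\an}$ and to $\mu$, I obtain a coherent ideal $\mathscr I\subset \mathscr O_X$ with $\mathscr I\an=\mathscr J$ and an $\mathscr O_X$-linear map $\nu\colon \mathscr I\to \mathscr O_X$ with $\nu\an=\mu$.

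I then work on an affine open $\mathrm{Spec}\,B\subset X$, where $\mathscr I$ is an ideal $I\subset B$ and $\nu$ is a $B$-linear map $I\to B$. The key claim is that $I$ contains a nonzerodivisor of $B$. For each associated prime $\mathfrak p$ of $B$ I pick, using surjectivity of $\pi$, a point $\xi\in X\an$ over $\mathfrak p$; the map $\mathscr O_{X,\mathfrak p}\to \mathscr O_{X\an,\xi}$ is local and faithfully flat, so $\mathfrak p\,\mathscr O_{X\an,\xi}$ is a proper ideal and the behaviour of associated primes under flat maps ($\mathrm{Ass}(R')=\bigcup_{\mathfrak p\in \mathrm{Ass}(R)}\mathrm{Ass}(R'/\mathfrak p R')$) produces an associated prime $\mathfrak q$ of $\mathscr O_{X\an,\xi}$ containing $\mathfrak p\,\mathscr O_{X\an,\xi}$. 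If $I$ were contained in $\mathfrak p$ we would get $\mathscr J_\xi=I\,\mathscr O_{X\an,\xi}\subset \mathfrak q$, contradicting that a denominator sheaf contains a nonzerodivisor at each stalk. Hence $I$ lies in no associated prime of $B$, and prime avoidance furnishes a nonzerodivisor $h\in I$. Setting $\tilde f:=\nu(h)/h\in \mathrm{Frac}(B)=\mathscr K_X(\mathrm{Spec}\,B)$, the identity $h\,\nu(a)=\nu(ha)=a\,\nu(h)$ shows that $\tilde f$ is independent of $h$ and that $\nu$ is multiplication by $\tilde f$; by injectivity of $\mathscr O_X\hookrightarrow \mathscr K_X$ these local meromorphic functions agree on overlaps and glue to a global $\tilde f\in \mathscr K_X(X)$. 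Finally, both maps $(\cdot f)$ and $(\cdot \tilde f\an)$ restrict to $\mu=\nu\an$ on $\mathscr J$, and cancelling a local nonzerodivisor of $\mathscr J$ gives $f=\mu(h)/h=\tilde f\an$, so $f$ lies in the image.

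I expect the delicate point to be precisely the descent of the denominator property in the third step — transferring the existence of a nonzerodivisor from $I\otimes_B \mathscr O_{X\an,\xi}=\mathscr J_\xi$ back to $I\subset B$ — which rests on faithful flatness of $\pi$ and on the control of associated primes under flat base change. The GAGA equivalence itself, and the reconstruction and gluing of $\tilde f$, should then be essentially formal.
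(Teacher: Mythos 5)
Your proposal is correct, and its skeleton is the same as the paper's: use GAGA to descend the denominator ideal $\mathscr J$ and the multiplication map $\mu$ to a coherent ideal $\mathscr I\subset\mathscr O_X$ and a map $\nu\colon\mathscr I\to\mathscr O_X$, then show that $\mathscr I$ has a regular section on every affine open, and reconstruct $f=\nu(h)/h$. Where you genuinely diverge is in how the regular section is produced. The paper stays global and never touches analytic local rings: if $\mathscr I(U)$ consisted of zerodivisors it would lie, by prime avoidance, inside a single associated prime $\mathrm{Ann}(b)$ of $\mathscr O_X(U)$ with $b\neq 0$; then $b$ annihilates $\mathscr I\an$, and on an affinoid G-covering $(V_i)$ of $U\an$ on which $f=g_i/h_i$ with $h_i\in\mathscr I\an(V_i)$ regular, one gets $h_ib|_{V_i}=0$, hence $b|_{V_i}=0$, hence $b=0$, a contradiction. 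You instead pick a point $\xi\in X\an$ over the associated prime and transfer the containment through the faithfully flat local map $\mathscr O_{X,\mathfrak p}\to\mathscr O_{X\an,\xi}$ via the flat base change formula for $\mathrm{Ass}$. This works, but it is heavier: it needs $\mathscr O_{X\an,\xi}$ to be Noetherian and, as explained below, it needs $\xi$ to have affinoid neighborhoods, i.e., $X\an$ to be good. (This is true here -- $X\an$ is covered by the open subsets $U_i\an$ with $U_i$ affine, and these are good -- but it is a hypothesis your argument silently consumes, whereas the paper's G-covering argument is insensitive to it.)

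The one step that is a genuine gap, rather than a routine omission, is the assertion that ``a denominator sheaf contains a nonzerodivisor at each stalk.'' This is precisely the analytic counterpart of the statement you are trying to prove on the scheme side, so it cannot be taken as known; moreover nothing in the G-sheaf definition of $\mathscr K_{X\an}$ hands you a regular denominator on a \emph{neighborhood} of $\xi$ (a G-covering may contain $\xi$ in members that are not neighborhoods of it). The claim is nevertheless provable because $X\an$ is good: choose an affinoid neighborhood $V$ of $\xi$; then $\mathscr K_{X\an}(V)$ is the total ring of fractions of $\mathscr O(V)$, so $f|_V=g/h$ with $h$ regular in $\mathscr O(V)$, and $h\in\mathscr J(V)$; finally the germ $h_\xi$ is regular in $\mathscr O_{X\an,\xi}$ because for every smaller affinoid neighborhood $W$ the restriction $\mathscr O(V)\to\mathscr O(W)$ is flat, so by the same contraction of associated primes that you already invoke, $h|_W$ is regular in $\mathscr O(W)$, and one passes to the colimit. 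With this supplied, your proof is complete; the comparison shows why the paper's variant is the more economical one.
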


\begin{proof}
Let $f$ be a meromorphic function on
$X\an$. By GAGA for coherent sheaves (see \cite[Appendix A]{poineau2010}), the sheaf denominators of $f$ is 
of the form $\mathscr I\an$ for a (uniquely determined) sheaf of ideals $\mathscr I$ on 
$X$. Multiplication by $f$ defines a morphism from $\mathscr I\an$ to $\mathscr O_{X\an}$, which once again
by GAGA comes from a unique map 
from $\mathscr I$ to $\mathscr O_X$.
In order to prove that $f$ belongs to $\mathscr K_X(X)$, it suffices to prove that on every
affine open subscheme
$U$ of $X$,
the sheaf $\mathscr I$
has a regular  section $h$ (for if one denotes by $g$ the image of $h$ in in $\mathscr O(U)$ one will have
$f=g/h$ on $U\an$). So let $U$ be an affine open subscheme of $X$. Supppose
that $\mathscr I(U)$ only consists of non-regular elements. This means that $\mathscr I(U)$
is contained in the union of all associated primes of $\mathscr O_X(U)$, which implies that it is contained
in one of them. So there exists a non-zero element $b$ of $\mathscr O_X(U)
\subset \mathscr O_{X\an}(U\an)$ such that $hb=0$ for all
$h\in \mathscr I(U)$. By the definition of $\mathscr I\an$ there is an affinoid G-covering $(V_i)$ of $U\an$ and for every $i$ a section $h_i$ of $\mathscr I\an(V_i)$ which is a regular
element of $\mathscr O_X(V_i)$. Since $h_i$ is a section
of $\mathscr I\an$ we have $h_ib|_{V_i}=0$. As $h_i$ is regular, this implies that $b|_{V_i}=0$. 
Since the $V_i$'s G-cover $U\an$ this yields  $b=0$, contradiction.
\end{proof}

For proving Theorem \ref{main}
we will need three non-archimedean analogues of classical complex-analytic
results, all due to Bartenwerfer in the strict case. We will prove them in general 
by reducing to the strict case. For doing this, we shall use the following notation. If
$r$ is a $k$-free polyradius, that is, a finite family $(r_1,\ldots, r_n)$ of positive
numbers multiplicatively independent modulo $\abs{k^\times}$, we denote by 
$k_r$ the set of all power series $\sum_{I\in \Z^n}a_IT^I$ where the $a_I$'s belong to 
$k$ and $\abs{a_I}r^I\to 0$ when $\abs I\to \infty$. The formula 
$\sum a_I T^I\mapsto \max_I \abs{a_I}r^I$ is a multiplicative norm that makes $k_r$
a complete extension of $k$ (see for instance \cite[1.2]{ducros2007}). If $A$
is a $k$-affinoid algebra we set $A_r=A\widehat \otimes_k k_r$. This is the set of 
power-series $\sum_{I\in \Z^n}a_IT^I$  where the $a_I$'s belong to 
$k$ and $\|a_I\|_\infty r^I\to 0$ when $\abs I\to \infty$. 

\begin{prop}[Bartenwerfer]\label{bartenwerfer}
Let $X$ be a $k$-analytic space and let $Z$ be 
a Zariski-closed subset of $X$ with empty interior; set $U=X\setminus Z$. 
\begin{enumerate}[1]

\item If $X$ is normal any bounded holomorphic function on $U$ extends to a holomorphic function on 
$X$. 
\item If $X$ is reduced and $Z$ is everywhere of codimension $\geq 2$ in $X$, 
then every meromorphic
function on $U$ extends to a meromorphic function on $X$. 
\item If $X$ is normal and $Z$ is everywhere of codimension $\geq 2$ in $X$, then 
every holomorphic
function on $U$ extends to a holomorphic  function on $X$. 
\end{enumerate}
\end{prop}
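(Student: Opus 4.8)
The plan is to reduce all three assertions to the case in which $k$ is nontrivially valued and $X$ is \emph{strictly} affinoid, for that is exactly the setting covered by Bartenwerfer's classical theorems, which we take as the known input. The whole argument is then a descent: localize to the affinoid case, enlarge the ground field so as to make the affinoid algebra strict, apply Bartenwerfer over the bigger field, and descend the resulting extension.

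First I would reduce to $X=\mathscr M(A)$ affinoid. The hypotheses (normal or reduced, $Z$ Zariski-closed of empty interior, the codimension condition) and the conclusion are all G-local, so on an affinoid G-covering $(V_i)$ the restriction of $f$ to $U\cap V_i$ extends, by the affinoid case, to a function $g_i$ on $V_i$ of the required type. Since $Z$ has empty interior, $U\cap V_i\cap V_j$ is dense in the reduced space $V_i\cap V_j$, so $g_i$ and $g_j$ agree there and glue to a global extension; the same remark shows that the extension is unique once it exists, a fact used repeatedly below.

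For $X=\mathscr M(A)$ affinoid I would choose a $k$-free polyradius $r$ with $A_r=A\,\widehat\otimes_k k_r$ strictly $k_r$-affinoid, which is possible by absorbing the finitely many radii in a presentation of $A$ into $\abs{k_r^\times}$. Writing $X_r=\mathscr M(A_r)$ and letting $Z_r,U_r$ be the preimages of $Z,U$ under the faithfully flat map $\pi\colon X_r\to X$, one must check that the hypotheses persist: $A_r$ inherits reducedness and normality from $A$, $Z_r$ is Zariski-closed with empty interior (as $U_r$ stays dense by flatness), and the codimension of $Z_r$ equals that of $Z$. Bartenwerfer's strict statement, applied to $\pi^*f$ on $U_r$, then yields an extension $g$ to $X_r$ of the required type.

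It remains to descend $g$ to $X$, which is the only delicate point. Since $A\to A_r$ is faithfully flat, it suffices to produce a descent datum, i.e.\ to check that the two pullbacks $p_1^*g,p_2^*g$ along the projections $\mathscr M(A_r\,\widehat\otimes_A A_r)\rightrightarrows X_r$ coincide. Using the canonical identification $A_r\,\widehat\otimes_A A_r\cong A_r\,\widehat\otimes_k k_r$, the double product is again reduced because $k\to k_r$ preserves reducedness; both pullbacks restrict to $\pi$-pullbacks of $f$ on the dense preimage of $U$, so by uniqueness they are equal. For (1) and (3), $g\in A_r=\mathscr O(X_r)$ then descends to an element of $A$ by faithfully flat descent for the structure sheaf, giving the holomorphic extension. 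For (2) one instead descends, by faithfully flat descent for coherent sheaves, the coherent sheaf of denominators of $g$ together with the multiplication-by-$g$ map into $\mathscr O_{X_r}$; the resulting data define a meromorphic extension of $f$ on $X$. The steps needing genuine care are the stability of reducedness, normality and codimension under $k\to k_r$ and under the double fibre product, all of which are furnished by the references on analytic commutative algebra recalled in Section~\ref{reminders}, rather than the formal descent itself.
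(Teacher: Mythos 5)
Your reduction skeleton coincides with the paper's: argue G-locally to reach the affinoid case $X=\mathscr M(A)$, base change along $k\to k_r$ so that $A_r$ becomes strictly $k_r$-affinoid over a non-trivially valued field, apply Bartenwerfer's strict theorems there, and descend. The gap is in the last step, which you explicitly dismiss as routine (``rather than the formal descent itself''): in the analytic category this descent is precisely the crux. Faithfully flat descent along $A\to A_r$ is \emph{not} a consequence of algebraic faithfully flat descent, because the relevant Amitsur complex involves the completed tensor product $A_r\widehat\otimes_A A_r$ rather than $A_r\otimes_A A_r$; exactness of the completed complex (needed to descend elements in (1) and (3)) and effectiveness of descent data for coherent sheaves (needed in (2) to descend the sheaf of denominators) are genuine theorems of non-archimedean geometry. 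Moreover, the classical such theorem (Bosch--G\"ortz, descent of coherent modules along faithfully flat maps of affinoid spaces) is proved in the strict rigid-analytic setting over a non-trivially valued field -- exactly the setting whose unavailability is the reason Proposition \ref{bartenwerfer} needs a proof at all; Berkovich-setting descent theorems do exist in more recent literature, but they are heavy results whose applicability you would have to verify, not formalities. As written, part (2) of your argument therefore rests on an unjustified black box (and even granting it, you would still need to check that the descended ideal has locally regular sections, so that it actually defines a meromorphic function extending $f$).

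The paper avoids descent formalism altogether by exploiting the explicit structure of $A_r$: it is the ring of power series $\sum_I a_I T^I$ with coefficients $a_I\in A$. If $g=\sum a_IT^I\in A_r$ restricts on $U_r$ to an element of $\mathscr O_X(U)$, then $a_I|_V=0$ for $I\neq 0$ on every affinoid domain $V$ of $U$, hence $a_I=0$ by density of $U$ and reducedness of $X$, so $g\in A$ -- no double fibre product, no cocycle condition, no descent theorem. (Note that the available hypothesis is stronger than a descent datum: one knows that the restriction of $g$ to $U_r$ \emph{comes from} $U$, and that is what gets used.) For (2), the paper first replaces $X$ by its normalization and argues componentwise, so that $A$ and $A_r$ are normal integral domains; a meromorphic function is then a genuine fraction $g/h$ with $g=\sum b_IT^I$, $h=\sum c_IT^I$, and the same coefficientwise reasoning shows that all $b_I/c_I$ with $c_I\neq 0$ define one and the same meromorphic function on $X$, equal to $g/h$. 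This sidesteps any descent of coherent sheaves. Your verification of the descent datum itself (agreement of the two pullbacks on the dense preimage of $U$, using reducedness of $A\widehat\otimes_k(k_r\widehat\otimes_k k_r)$) is correct, and for elements one could even prove the completed Amitsur exactness by hand via the same coefficient expansion -- but at that point the descent machinery is a detour, and the direct argument is both shorter and the only route that handles (2) without unproved input.
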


\begin{proof}
If $k$ is non-trivially valued and $X$ is strictly $k$-analytic, 
(1) is \cite[\S 3, Theorem]{bartenwerfer1976}
and (2) and (3) are particular cases of
\cite[Theorem]{bartenwerfer1975}. We are just simply
going to explain how to extend these statements 
when
$X$ is not assumed to be strict. Everything being G-local on $X$,
we can assume that $X$ is affinoid; and since for proving (2) one can replace $X$
with its normalization, we can assume that $X$ is normal and,  by arguing componentwise, 
irreducible. Let $A$ be the algebra of analytic functions on $X$. 
Let $r$ be a $k$-free polyradius such that
$\abs{k_r}^\times\neq \{0\}$ and $A_r$ is $k_r$-strict. 
The ring $A_r$ is a normal integral 
domain -- this follows for instance from
\cite[Exemple 3.3, \Th 3.1 and 3.3]{ducros2009}, but 
an elementary proof, 
can be found in \cite[Appendix]{ducros2003}.
Then Bartenwerfer's statements apply on the $k_r$-analytic space $X_r$, 
and for deducing them on $X$ it suffices to prove the following: 

\begin{enumerate}[a]
\item if $f=\sum a_I T^I$ is an element of $A_r$ whose
restriction to $U_r$ belongs to $\mathscr O_X(U)$ then $f\in A$. 
\item If $g=\sum b_I T^I$ and $h=\sum c_IT^I$ are two elements
of $A_r$ with $h\neq 0$ such that the restriction of $g/h$ to $U_r$
belongs to $\mathscr K_X(U)$ (where $\mathscr K_X$ is the sheaf of
meromorphic functions on $X$), then $g/h$ belongs to the ring of fractions
of $A$. 
\end{enumerate}

Let us prove (a). If $V$ is an affinoid domain of $U$ then by assumption
the element $\sum (a_I|_V)T^I$ of $\mathscr O_X(V)_r$ belongs to $\mathscr O_X(V)$, 
which means that $a_I|_V=0$ as soon as $I\neq 0$. Since this holds for arbitrary $V$
we see that $a_I|_U=0$ as soon as $I\neq 0$. As $X$ is reduced and $U$ is dense
one has $a_I=0$ as soon as $I\neq 0$, which proves that $f\in A$. 

Let us prove (b). Let 
$E$
denote the set of indices $I$ with $c_I\neq 0$ (it is non-empty by assumption). 
Let $V$ be a connected and non-empty affinoid domain of $U$.
As $X$ is normal $\mathscr O_X(V)$
is an integral domain, and by assumption there exist $u$ and 
$v$ in $\mathscr O(V)$ with $v\neq 0$ and $g/h=u/v$ as meromorphic functions
on $V_r$, which amounts to the equality $gv=hu$ in $\mathscr O_X(V)_r
=\mathscr O_{X_r}(V_r)$. 
As a consequence, if $I\in E$ then $u/v=b_I/c_I$ as meromorphic
functions on $V$ (note that $c_I|_V\neq 0$ since
$V$ is non-empty and $X$ is irreducible
and reduced) and $b_I|_V=0$ if $I\in E$. 
Therefore all meromorphic functions
$b_I/c_I$ for $I\in E$ coincide on every irreducible affinoid domain $V$ of the normal space $U$, 
then coincide on the whole of $U$, and then of $X$ since the latter is reduced ; let $w$ denote the 
common value of the $b_I/c_I$ for $I\in E$, as
a meromorphic function on $X$. 
If $I\notin E$ then $b_I=0$ on every irreducible affinoid domain $V$ of the normal space $U$, 
then on the whole of $U$, and then on $X$ since the latter is reduced. Hence $g/h=w$, which ends
the proof. 
\end{proof}

\section{The proofs}\label{proofs}

\subsection{Proof of Lemma \ref{lemma-disc}}\label{proof-disc}
Implication (i)$\Rightarrow$(ii)
is obvious, so let us assume (ii)
and prove (i). Since $f$ is invertible on a punctured neighborhood of the origin, 
and since the assertion only concerns the singularity of $f$ at the origin, 
we can assume up to shrinking $D$ that $f$ is an invertible function on $U$. 
Let $t$ denote the coordinate function on $D$ and let $R$ be the radius 
of $D$. The function 
$f$ can be written as a power series $f=\sum_{i\in \Z} a_i t^i$ and since $f$
is invertible, 
Lemma \ref{dominant-monomial} ensures that
there exists some $j$ such that $\abs{a_j}r^j>\abs{a_i}r^i$ for
every $i\neq j$ and every $r\in (0,R)$ (and even $(0,R]$ if $D$ is closed). 
By letting $r$ tend to zero we see that $a_i=0$ for every $i<j$. Then 
$f=\sum_{i\geq j}a_i t^i$ admits a meromorphic extension to $d$. \qed

\subsection{Proof of Theorem \ref{theo-discwise}}\label{proof-quasi-main}
Let $\mathscr K_X$ be the sheaf of meromorphic functions on the
space $X$ 
and let $(X_i)_{i\in I}$ be the family of its reduced irreducible components. Let $X'$ be the normalization of
$X$. We have $\mathscr K_X(X)=\mathscr K_{X'}(X')$: indeed, this can be checked G-locally 
and thus enables to assume that $X$ is affinoid, in which case this just comes from the corresponding
scheme-theoretic statement. As a consequence, $\mathscr K_X(X)=\prod_i \mathscr K_{X_i}(X_i)$. 
Let $J$ be the set of indices $i$ such that 
$X_i\cap U\neq
\emptyset$ and set
$V=\bigcup_{i\in J}X_i$, 
equipped with its reduced structure.
Now, every morphism $\phi$ as in (ii)
factors through $V$, and if $f|_{V\cap U}$ admits a meromorphic extension
$(g_i)_{i\in  J}$ to $V$, then $f$ admits
a meromorphic extension to the whole of $X$,  namely $(f_i)_{i\in I}$ with
$f_i=g_i$ if $i\in J$ and $f_i=0$ otherwise. Hence for proving our theorem we can 
replace $X$ by $V$ and thus assume that $U$ is dense. 

\subsubsection{}
Assume that (i) holds and let $\phi$ as in (ii). Let $W$ be an affinoid domain of 
$X$ containing $\phi(x)$. The pre-image $\phi^{-1}(W)$ is an analytic domain of $D$ containing 
the origin, so this is a neighborhood of the later in $D$. Up to shrinking $D$ (this is harmless
for proving (ii), which is a local property at the origin) we can thus assume that $\phi(D)\subset W$; 
otherwise said we can assume that $X$ is affinoid, say $X=\mathscr M(A)$. 
By assumption one can write $f=g/h$ with $h$ a non-zero divisor of $A$. 
As $U$ is dense in $X$, the restriction $h|_U$ is not identically zero, so that
$\phi^*h|_{D\setminus \{0\}}$ is not identically $0$. This ensures that $\phi^*h$ is a regular
holomorphic function on the reduced irreducible space $D$, and in view of the equality 
$\phi^*f\phi^*h=\phi^*g$ on $D\setminus \{0\}$, the function $\phi^*f$ admits
the meromorphic extension $\phi^*g/\phi^*h$ to the whole of $D$, whence (ii). 

\subsubsection{}
We now start the proof of implication (ii)$\Rightarrow$(i), 
in several steps. So we assume that (ii) holds. 
We shall use implicitly several times the following fact: 
since the triple $(X,U,f)$ satisfies (ii), for every analtyic space $Y$ defined over
a complete extension $K$ of $k$ and every $k$-morphism $\psi \colon Y\to X$
the triple $(Y,\psi^{-1}(U),\psi^*f)$ still satisfies (ii) (over the ground field
$K$). 

As $U$ is a dense Zariski-open subset of the reduced space $X$, 
the function $f$ as at most one meromorphic extension to $X$. 
This uniqueness property also holds on any analytic domain of $X$, which will enable us to prove (i)
by argueing G-locally: 
indeed, since G-local meromorphic extensions will be canonical, we will be able to glue them. 

We make a first use
of our right to argue G-locally by assuming
that $X$
is affinoid, say $X=\mathscr M(A)$. 
We denote by $p$ the characteristic exponent of $k$. 

\subsubsection{}\label{radicial-basechange}
Let $n$ be an integer and set $L=k^{1/p^n}$. 
Assume that the image of $f$ in $\mathscr O(U_{L,\mathrm{red}})$ (which 
we shall still denote by $f$) has a meromorphic extension
to $X_{L,\mathrm{red}}$. 
We are going to prove that $f$
has a meromorphic extension to $X$. Note that
for every affinoid $k$-algebra $B$ one has 
$(B\widehat\otimes_k L)^{p^n}\subset B$
(check it on $B\otimes_k L$ and use a limit
argument), which implies by arguing G-locally
that $\mathscr O_X(U_L)^{p^n}\subset \mathscr O_X(U)$. 

By our assumption 
there exist $g$ and $h$ in $A_L:=A\widehat\otimes_k L$
with $h$ regular (\ie, not a zero divisor)
in $A_{L,\mathrm{red}}$ such that $f=g/h$ in $\mathscr O(U_{L,\mathrm{red}})$. We have then $fh=g+N$ in 
$\mathscr O(U_L)$ for some nilpotent function $N$ on $U_L$ (the sheaf of locally nilpotent functions on $U_L$ is the restriction of
the coherent sheaf on $X_L$ associated with the nilradical of the noetherian ring $A$, so its sections are actually nilpotent). It follows that
for $m\geq n$ large enough $f^{p^m}h^{p^m}=g^{p^m}$ in $\mathscr O(U_L)$; but since
$m\geq n$ all functions
involved in this equality belong to the subring $\mathscr O(U)$ of $\mathscr O(U_L)$, so $f^{p^m}h^{p^m}=g^{p^m}$  should be understood as
an equality between analytic functions on $U$.

Set $H=h^{p^m}$ and $G=g^{p^m}$, so that we have $f^{p^m}H=G$
on $U$. Both $G$ and $H$ belong to $A$. Let us show that 
$H$ is a regular element of $A$. 
As $X$ is reduced, it suffices to show that the zero-locus of $H$ does not contain any irreducible component of $X$. But since $h$
is a regular element of $A_{L,\mathrm{red}}$, the zero-locus of $h$ in $X_L$, which is the same as the 
zero-locus of $H$, does not contains any irreducible component of $X_L$,
and we conclude by noticing that $X_L\to X$ induces 
a homeomorphism for the Zariski topologies on the source and the target.

Since $f^{p^m}H=G$ one has $(fH)^{p^m}=GH^{p^m-1}$ on $U$; therefore the analytic function $fH$ is bounded on $U$. By the non-archimedean version of Riemann's extension theorem 
(essentially due to Bartenwerfer, see Proposition  \ref{bartenwerfer} (1)
for more details), 
$fH$ extends to a holomorphic function on the normalization $X'$ on $X$. Therefore $fH$ extends to a
meromorphic function on $X$ and so does $f$ since $H$ is regular.

\subsubsection{}
If $n$ is large enough, a result by Conrad \cite[Lemma 3.3.1]{conrad-1999}, see also \cite[\Th 6.10]{ducros2009},
ensures that the normalization of $X_{k^{1/p^n}}$ is geometrically normal. 
In view of \ref{radicial-basechange}, it suffices to 
check that the image of $f$ in $\mathscr O(U_{k^{1/p^n},\mathrm{red}})$
extends to a
meromorphic function on $X_{k^{1/p^n},\mathrm{red}}$. As a consequence, 
we may assume that the normalization $X'$ of $X$ is geometrically normal
(and then $X'_L$ is the normalization of $X_L$ for every complete extension $L$ of $k$, see
\cite[\Prop 5.20]{ducros2009}). Let $U'$ be the pre-image of $U$ in $X'$.

Another result
by Conrad \cite[\Th 3.3.8]{conrad-1999}, 
see also \cite[\Th 6.11]{ducros2009}, ensures that for large enough $n$, the space
$(X'_{k^{1/p^n}}\setminus U'_{k^{1/p^n}})_{\mathrm{red}}$ is geometrically reduced. So using again \ref{radicial-basechange}, we may moreover assume that 
$(X'\setminus U')_{\mathrm{red}}$ is geometrically reduced.

\subsubsection{} 
By replacing $X$ with $X'$ and $U$ with $U'$
we can assume that $X$ is normal. And then by arguing componentwise we can 
assume that $X$ is moreover integral. Let $n$ denote its dimension. As $U$ is dense, $X\setminus U$ is a Zariski-closed subset of $X$
of dimension $\leq n-1$. Let $K$ be the completion of an algebraic closure of $k$. 
Let $Y$ denote the complement of the quasi-smooth locus of $X$. The space 
$X_K$ is normal, therefore $\dim Y_K\leq n-2$, hence $\dim Y\leq n-2$. 
And the space $(X\setminus U)_{\mathrm{red},K}$ is reduced, 
so it has a dense quasi-smooth locus. Therefore $(X\setminus U)_{\mathrm{red}}$
has a dense quasi-smooth locus. 
Let us denote by $Y'$
be the union of $Y$, of all irreducible
components of $X\setminus U$ of
dimension
$\leq n-2$, and of the non-quasi-smooth locus
of $(X\setminus U)_{\mathrm{red}}$. 
By construction, $Y'$ is a Zariski-closed subset of $X$ of dimension 
$\leq n-2$. As $X$ is  reduced, it follows from an extension theorem
essentially due to Bartenwerfer
(see Proposition \ref{bartenwerfer} (2) for more details)
that every meromorphic function on  $X\setminus Y'$ extends to 
a meromorphic function on $X$. 
It is therefore sufficient to prove that $f|_{V\cap U}$ extends to a
meromorphic function on $V$ for every
affinoid domain $V$ of $X\setminus Y'$. Hence we
have reduced to the case where $X$ is quasi-smooth and $U$ is the complement
of a quasi-smooth hypersurface $S$.

\subsubsection{}
For proving the theorem we may once again argue locally on
$X$. The theorem obviously holds on $X\setminus S$ , 
and if $x$ is a point of $S$ it follows from 
Lemma \ref{structure-smooth}
that there exists an affinoid neighborhood $V$ of $x$ in $X$
such that $(V,S\cap V)$ is isomorphic
to $D\times_k (S\cap V)\times D,\{0\}\times_k  (S\cap V))$ for some 
closed one-dimensional disc $D$. Moreover
we can shrink $V$
so that
the smooth space $S
\cap V$ is connected, hence irreducible (and reduced). 

Therefore we can assume that $X=D\times_k Y$ for some
irreducible
and reduced analytic space
$Y$ and some closed
disc $D$
and that $S=\{0\}\times_k Y$.
(The quasi-smoothness of $S\simeq 
Y$ was useful
for reducing to this
product situation, but will not be used anymore; the fact that $Y$
is irreducible and reduced
will  be sufficient.)

Let $t$ be the coordinate function on $D$. 
By hypothesis, $f$ is an analytic
function defined on $(D\setminus \{0\})\times_k Y$, 
so it can be written $\sum_{i\in \mathbf 
Z}b_i t^i$ where every $b_i$ belongs to 
$\mathscr O(Y)$ (\ref{functions-annulus}). 
Choose $y\in Y$ lying over the generic point
of $\mathrm{Spec}
\,\mathscr O(Y)$.
The fiber of $X=D\times_k  Y$ over $y$ (through the second
projection) is canonically isomorphic to 
$D_{\hr y}$; let
$\phi\colon D_{\hr y}\to X$ be the corresponding 
embedding. By construction, $\phi^{-1}(U)
=D_{\hr y}\setminus \{0\}$. It thus follows from our 
assumption (ii) that $\phi^*f$ admits a meromorphic
extension to $D_{\hr y}$. Since $\phi^*f=\sum_i b_i(y)t^i$, 
this means that there is some $j$ such that
$b_i(y)=0$ for every $i\leq j$.
As $y$
is Zariski-generic on the reduced, irreducible space $Y$,
this implies that $b_i=0$ for every $i<j$. 
Thus $f=\sum_{i\geq j}b_i t^i$ extends to a meromorphic function on
$X$, which ends the proof of Theorem \ref{theo-discwise}.\qed

\subsection{Proof of Theorem \ref{main}}\label{proof-main}
Let us assume that (i) holds. Let us choose a meromorphic extension of $f$, which 
we still denote by $f$, and let $\mathscr I$ be its sheaf of denominators. The 
$\mathscr J:=\mathscr If$ is a coherent sheaf of ideals on $X$, whose restriction 
to $U$ is equal to $(f|_U)$ since $\mathscr I|_U=\mathscr O_U$. 
Therefore the zero-locus of $f|_U$ is the intersection of $U$ and of the zero-locus of
$\mathscr J$, which is a Zariski-closed subset of $X$, whence (ii). 

Conversely, assume that (ii) holds. Let $L$ be a complete extension of 
$k$, let $D$ be a one-dimensional 
disc over $L$ and let $\phi\colon D\to X$ be a $k$-morphism such that $\phi^{-1}(U)=D\setminus \{0\}$. The zero-locus of
the function $\phi^*f\in \mathscr O_D(D\setminus \{0\})$ is then equal to $\phi^{-1}(Z)\cap (D\setminus \{0\})$. Therefore by Lemma
\ref{lemma-disc}
which we have already proved
(\ref{proof-disc}), 
$\phi^*f$ admits an extension to a meromorphic function of $D$. Since this holds for arbitrary $(L,D,\phi)$, Theorem
\ref{theo-discwise} proven 
in \ref{proof-quasi-main}
ensures that $f$ admits an extension to a meromorphic function on $X$, 
which ends the proof of Theorem \ref{main}. \qed

\subsection{Proof of Corollary \ref{cor}}
The statement is local on $X$, which 
enables us to assume that $X$
is affine. Let us choose a 
reduced projective compactification $\overline X$
of $X$. The zero-locus of $f$ on $X\an$
is then equal to $\overline Y\an\cap X\an$
(where $\overline Y$ is the closure
of $Y$ in
$\overline X$) so that we can apply
Theorem \ref{main} proved in \ref{proof-main}
(taking $X=\overline X\an$ and $U=X\an$)
and conclude that the function $f$ on $X\an$
``is" then a meromorphic function on $\overline X\an$. By GAGA for meromorphic functions
(see \Prop \ref{gaga-mero}), $f$ ``is" a meromorphic function 
on $\overline X$. Then $f$ is a meromorphic function on the scheme $X$
inducing a holomorphic function on $X\an$. By Lemma \ref{mero-holo}, 
$f\in \mathscr O_X(X)$. 
\qed

\section*{Acknowledgements}
The starting point of this work was a discussion with Marco Maculan who drew my attention to the fact that
invertible analytic functions on analytifications of curves are algebraic. This is what led me to Theorem \ref{main}
and Corollary~\ref{cor}, and I would like to thank him warmly for that. 
I am also very grateful to Piotr Achinger for his careful reading a former version of this manuscript, his
insightful comments and his very interesting suggestion described in Remark~\ref{question-piotr} at the beginning of this text. Finally I am thankful to the two anonymous referees whose remarks,
suggestions and comments helped on several versions of this work me to signficantly improve the manuscript.

\bibliographystyle{smfalpha}
\bibliography{aducros}

\end{document}